\newcommand{\A}{\mathcal{A}}
\newcommand{\E}{\mathcal{E}}
\newcommand{\Z}{\mathbb{Z}}
\newcommand{\R}{\mathbb{R}}
\newtheorem{theorem}{Theorem}[section]
\newtheorem{definition}[theorem]{Definition}
\newtheorem{lemma}[theorem]{Lemma}
\newtheorem{conjecture}[theorem]{Conjecture}
\newtheorem{remark}[theorem]{Remark}
\newtheorem{example}[theorem]{Example}
\numberwithin{equation}{section}
\DeclareMathOperator{\PTE}{PTE}
\newcommand{\id}{\text{id}}
\newcommand{\word}[1]{\normalfont\mbox{\texttt{#1}}}
\def\pt{\@ifstar\@@pt\@pt}
\def\@@pt#1{\text{partition } {\{#1\}}}
\def\@pt#1{{\{#1\}}}
\title[Prouhet-Tarry-Escott and Thue-Morse]{
The Prouhet-Tarry-Escott Problem \\
and \\
Generalized Thue-Morse Sequences 
}
\date{April 24, 2013}
\author[Bolker]{Ethan D. Bolker}
\address{Department of Computer Science and Department of Mathematics,
UMass Boston, Boston, MA 02125}
\email{eb@cs.umb.edu}
\urladdr{www.cs.umb.edu/$\sim$eb}
\author[Offner]{Carl Offner}
\address{Department of Computer Science, UMass Boston, 
Boston, MA 02125}
\email{offner@cs.umb.edu}
\author[Richman]{Robert Richman}
\email{richmanchemistry@gmail.com}
\author[Zara]{Catalin Zara}
\address{Department of Mathematics, UMass Boston, 
Boston, MA 02125}
\email{catalin.zara@umb.edu}
\urladdr{www.math.umb.edu/$\sim$czara}
\subjclass[2010]{05A18, 91B32}
\begin{document}
\maketitle

\begin{abstract}
We present new methods of generating  Prouhet-Tarry-Escott partitions of arbitrarily 
large regularity. One of these methods generalizes the construction of the Thue-Morse 
sequence  to finite alphabets with more than two letters. We show how 
one can use such partitions to (theoretically!) pour the same volume coffee from an urn 
into a finite number of cups so that each cup gets almost the same amount of caffeine.
\end{abstract}

\tableofcontents

\section{Introduction}

Mathematicians have studied the eponymous objects in our title for
more than a century and a half. We've stumbled on some generalizations
with interesting consequences and new open questions.

Our contribution to the ongoing story began with a query from Richman
asking about how he might generalize his solution \cite{Richman} to
the problem of pouring two cups of coffee of equal strength
from a carafe in which the concentration increases with depth to
three or more cups. 

To fill two cups with four pours use the \emph{word}
\word{ABBA}: pour the first and last quarters into cup \word{A} and
the second and third quarters into \word{B}. For eight pours the magic
word is \word{ABBA BAAB}. Continuing recursively by appending to each
sequence of length $n$ its complement (in the obvious sense) you find
the optimal partitions for pourings using $2^k$ subdivisions.
Collecting all the solutions into the infinite word 
\begin{equation*}
\word{AB BA BAAB BAABABBA } \ldots
\end{equation*}
produces the \emph{Thue-Morse sequence}.

Richman's argument showing (for example) that the word \word{ABBA
BAAB} solves the two cup problem using eighths depends essentially
on
\begin{equation*}
1 + 4 + 6 + 7  = 2 + 3 + 5 + 8
\end{equation*}
and
\begin{equation*}
1^2 + 4^2 + 6^2 + 7^2  = 2^2 + 3^2 + 5^2 + 8^2.
\end{equation*}

These equations say that the partition
$\pt{\pt{1,4,6,7}, \pt{2,3,5,8}}$ 
whose blocks are the positions of \word{A} and \word{B} in the
magic word solves an instance of the \emph{Prouhet-Tarry-Escott problem} -
finding partitions of a set of integers such that each block has the
same sum of powers for several powers. 

It's this connection we will generalize.

\section{Words and partitions}

We  set the stage with some formal definitions.

\begin{definition}
Let $S$ be a non empty set of integers and $r \geqslant -1$ an
integer.  A partition  $P = \pt{S_1, \ldots, S_b}$ of $S$ is
$r$-\emph{regular} if
\begin{equation*}
\sum_{x \in S_1} x^k = \sum_{x \in S_2} x^k = \dotsb 
= \sum_{x \in S_b} x^k\end{equation*}
for all $k=0,1, \ldots, r$. We write $\PTE(S,b,r)$ for
the set of all such partitions. A partition $P$ has \emph{maximal regularity} 
$r$ if it is $r$-regular but not $(r\!+\!1)$-regular.
\end{definition}

\begin{remark}
Every partition is $(-1)$-regular, so $\PTE(S,b,-1)$ is the
set of partitions of $S$ into $b$ blocks. Some of the blocks may 
be empty. 
\end{remark}

\begin{remark}
This definition and much of what follows makes sense over any ring,
not just $\mathbb{Z}$. 
\end{remark}

If $P=\pt{S_1, \ldots, S_b}$ is an $r$-regular partition of $S$ with
$r \geqslant 0$ then its blocks
have the same number of elements, and therefore $b$ divides $m = \#S$. 
Clearly 
\begin{equation*}
\emptyset = \PTE(S,b,m/b) \subseteq \dotsb \subseteq \PTE(S,b,1)
\subseteq \PTE(S,b,0).
\end{equation*}

\begin{lemma}\label{lem:affine}
(Affine invariance)
Let $n \neq 0$ and $a$ be integers. Define  
$f \colon \mathbb{Z} \to \mathbb{Z}$ by $f(x) = a+nx$. 
If $P = \pt{S_1, \ldots, S_b}$ partitions $S$ then 
\begin{equation*}
a+ nP := f(P) = \pt{f(S_1), \ldots, f(S_b)}
\end{equation*} 
partitions $a+nS$, and
\begin{equation*}
P \in \PTE(S,b,r) \Longleftrightarrow a+nP \in \PTE(a+nS, b, r) \; .
\end{equation*}
\end{lemma}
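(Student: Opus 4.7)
The plan is to separate the set-theoretic assertion (that $f(P)$ partitions $a+nS$) from the arithmetic one (the $\PTE$ equivalence). The set-theoretic part is immediate: since $n \neq 0$, the map $f(x)=a+nx$ is injective on $\mathbb{Z}$, so the images $f(S_1), \ldots, f(S_b)$ remain pairwise disjoint with union $f(S) = a+nS$, giving the claimed partition of $a+nS$.

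For the arithmetic claim, my preferred approach is to first reformulate $r$-regularity as a statement about arbitrary test polynomials: namely, $P \in \PTE(S,b,r)$ is equivalent to
\begin{equation*}
\sum_{x \in S_i} q(x) \;=\; \sum_{x \in S_j} q(x) \quad \text{for all } i,j \text{ and every } q \in \mathbb{Q}[x] \text{ with } \deg q \leq r.
\end{equation*}
This follows because the condition in $q$ is linear and the monomials $1, x, \ldots, x^r$ form a $\mathbb{Q}$-basis for polynomials of degree at most $r$. Once this reformulation is in hand, the forward direction of the lemma drops out by taking $q(x) = (a+nx)^k$, which is a polynomial of degree $k \leq r$, and observing that $\sum_{y \in f(S_i)} y^k = \sum_{x \in S_i}(a+nx)^k$. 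The reverse direction is obtained symmetrically by taking $q(y) = ((y-a)/n)^k$, which has degree $k \leq r$ over $\mathbb{Q}$; here the hypothesis $n \neq 0$ is used to invert the affine map.

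The only friction is the passage to $\mathbb{Q}$, needed to write $1/n$. This is harmless because equality of two integer sums is the same assertion whether read in $\mathbb{Z}$ or in $\mathbb{Q}$, and Remark~1.3 has already noted that the definitions make sense over any ring. A reader who prefers to stay strictly inside $\mathbb{Z}$ can instead give a short induction on $k$: binomial-expand $(a+nx)^k = n^k x^k + \text{(lower powers of } x\text{)}$, isolate the top-degree term, and divide through by $n^k$ (legitimate since $n \neq 0$) to deduce block-independence of $\sum_{x \in S_i} x^k$ from block-independence of the lower power sums together with block-independence of $\sum_{x \in S_i}(a+nx)^k$. I expect no genuine obstacle in either route; the argument is essentially a linear-algebraic shuffle of the binomial theorem.
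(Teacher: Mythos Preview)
Your proof is correct. The paper's own proof is a single sentence: ``An easy induction on the powers less than or equal to $r$,'' which is exactly your alternative route via the binomial expansion and induction on $k$. Your primary approach---reformulating $r$-regularity as equality of block sums for \emph{all} polynomials of degree at most $r$, then specializing to $q(x)=(a+nx)^k$ and $q(y)=((y-a)/n)^k$---is a slightly different, more conceptual packaging: it replaces the explicit inductive unwinding with a one-shot linear-algebra observation. The trade-off is that your route requires a brief detour through $\mathbb{Q}$ (or the remark that equality of integer sums is unaffected), whereas the paper's induction stays inside $\mathbb{Z}$ throughout. Both are equally short once written out; the paper simply elected not to write it out.
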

\begin{proof}
An easy induction on the powers less than or equal to $r$.
\end{proof}

In other words, regularity is invariant under affine transformations.

We are interested in the Prouhet-Tarry-Escott problem when $S$ is a
set of consecutive integers. Affine invariance implies that we
need consider just $S=[m]=\{1,\ldots, m\}$; we will write
$\PTE(m,b,r)$ for $\PTE([m], b,r)$. 
In that case, $b$-block partitions have natural string
representations over an alphabet $\A$ with $b$ letters $a_1,
\ldots, a_b$.

\begin{definition} The \emph{string representation} of a $b$-block
partition $P = (S_1, \ldots, S_b)$ of  $S = [m]$ is the 
$m$-letter word $a_1a_2 \ldots a_m$ where $a_i$ is the $t^{th}$ letter
of the alphabet when $i \in S_t$. 
\end{definition}

Conversely, given an $m$-letter word $w$ on a $b$ letter alphabet we can
construct the partition $P_w$ of $[m]$ using the equivalence relation that
defines two indices as equivalent when $w$ has the same letter in
those two places. 

For the letters in reasonably small alphabets we will use
$\word{A}, \word{B}, \word{C}, \ldots$ rather than subscripts $a_i$
or integers. We may also occasionally leave blanks between the letters
to emphasize features of interest. These have no semantic significance.

In what follows we will freely interchange partitions of $[m]$ and the
corresponding words. Some arguments are better in one language, some
in the other.

Permuting the letters of the alphabet corresponds to permuting the
order in which we write the blocks of the partition. Since that order
is essentially irrelevant, we will usually impose a particular
lexicographic order on the alphabet, and use letters in that order as
necessary starting at the beginning of a word.

In some studies of the Thue-Morse sequence and its generalizations
it's convenient to use the alphabet $\{0, 1, \ldots, m\!-1\!\}$. If 
you number the blocks of the partition with  those digits rather than
those in $[m]$ then the $m$-letter words that encode the partitions
can be viewed as integers written in base $m$. 

\section{A new class of solutions}

In this section we generalize the recursive construction of the
Thue-Morse sequence in order to generate a new family of solutions to
our Prouhet-Tarry-Escott problems.

\begin{definition}
A  \emph{Latin square} on a $b$-letter alphabet is a
$b \times b$ square matrix of letters such that each letter occurs
exactly once in each row and each column.  When we fix an order on 
the alphabet, a Latin square is
\emph{normalized} when its first column is in alphabetical order. 
\end{definition}

A Latin square can always be normalized by permuting its rows.
In the literature ``normalized'' sometimes means the columns are permuted
as well so that the first row is in alphabetical order. We do not
require that.  

\begin{example}\label{ex:latin}
There is only one normalized Latin square on a 2-letter alphabet,
\begin{equation*}
L_0 = 
\begin{bmatrix}  \word{A}& \word{B} \\
\word{B}& \word{A}
\end{bmatrix}\; .
\end{equation*} 

There are two 
normalized Latin squares on a 3-letter alphabet:
\begin{equation*}
L_1 = 
\begin{bmatrix} 
\word{A}&\word{B}&\word{C}\\
\word{B}&\word{C}&\word{A}\\
\word{C}&\word{A}&\word{B}
\end{bmatrix} 
\quad \text{ and } \quad
L_2 = 
\begin{bmatrix} 
\word{A}&\word{C}&\word{B}\\
\word{B}&\word{A}&\word{C}\\
\word{C}&\word{B}&\word{A}
\end{bmatrix}.
\end{equation*}
\end{example}

The columns of a normalized Latin square $L$ of size $b$ correspond to
a sequence of permutations $(\pi_1 = \id, \pi_2, \ldots,
\pi_b)$  such that for each row $x$ of $L$ the sequence
$(\pi_1(x) = x, \pi_2(x), \ldots, \pi_b(x))$ is a permutation of the
alphabet. We will often use that list of permutations to represent~$L$:
\begin{equation*}
L = (\id, \pi_2, \ldots, \pi_b).
\end{equation*}

Now we use normalized Latin squares to capture the essence of the
recursive construction of the Thue-Morse sequence.

\begin{definition}\label{def:Lw}
If $w= a_1 a_2\ldots a_m$ is an $m-$letter word and $\pi$ 
is a permutation of the alphabet, then $\pi(w)$ is the $m$-letter word 
\begin{equation*}
\pi(w) = \pi(a_1)\pi(a_2) \ldots \pi(a_m)\; .
\end{equation*}
When $L=(\id, \pi_2, \ldots,
\pi_b)$ is a normalized Latin  
square we write $L(w)$ for the concatenated $mb$-letter word 
\begin{equation*}
L(w) = w\pi_2(w) \dotsb \pi_b(w)\; .
\end{equation*}
If $P$ is the partition corresponding to word $w$ then we write
$L(P)$ for the partition corresponding the word $L(w)$.
\end{definition}


\begin{example}\label{ex:3.4}
With the notations of Example~\ref{ex:latin}, 
\begin{equation}\label{eq:sixteen}
L_0(\word{ABBA BAAB}) = \word{ABBABAAB BAABABBA}
\end{equation}
and
\begin{equation}\label{eq:six}
L_2(\word{AB}) = \word{ABCABC}.
\end{equation}
\end{example}

The motivation for Definition~\ref{def:Lw} is the fact that using a
Latin square this way increases the regularity of a partition. See \cite{allouche}, \cite{lehmer} 
for references to Prouhet's construction,  based on a Latin square action on a cycle of maximal length.

The partition corresponding to the word \word{AB} on the alphabet
$\{\word{A}, \word{B}, \word{C}\}$ 
is just $(-1)$-regular; 
Equation~\eqref{eq:six}  shows that it extends to 
word \word{ABCABC}, which corresponds to a $0$-regular partition.

The example in Equation~\eqref{eq:sixteen} is more interesting. The word
on the left encodes a $2$-regular partition. The one on the right
corresponds to
\begin{equation*}
\pt{\pt{1,4,6,7,10,11,13,16}, \pt{2,3,5,8,9,12,14,15}}\; ,
\end{equation*}
which is $3$-regular.
Here's the last step in the proof, assuming we've
already showed that it's $2$-regular.
Let $X$ be the sum of the cubes in the first block:
\begin{align*}
X & =  1^3 + 4^3 + 6^3 +7^3 + 10^3 + 11^3 + 13^3 + 16^3 \\
  & =  1^3 + 4^3 + 6^3 +7^3 + (2+8)^3 + (3+8)^3 +(5+8)^3 +(8+8)^3 \\
  & =  1^3 + 4^3 + 6^3 +7^3  \\
  & \ + 2^3 + 3(2^2\times 8) + 3(2 \times 8^2) +  8^3 \\
  & \ + 3^3 + 3(3^2\times 8) + 3(3 \times 8^2) + 8^3 \\
  & \ + 5^3 + 3(5^2\times 8) + 3(5 \times 8^2) + 8^3 \\
  & \ + 8^3 + 3(8^2\times 8) + 3(8 \times 8^2) + 8^3  \\
  & = \sum_{k=1}^8 k^3 + 24(2^2 + 3^2 + 5^2 + 8^2)
	+ 192(2+3+5+8) + 4(8^3).
\end{align*}
%
%
The same kind of computation shows
that the sum $Y$ of the cubes in the second block is
\begin{equation*}
Y = \sum_{k=1}^8 k^3 + 24(1^2 + 4^2 + 6^2 + 7^2)
	+ 192(1+4+6+7) + 4(8^3).
\end{equation*}
Since the partition corresponding to \word{ABBABAAB} is $2$-regular,
$X=Y$.

The formal proof of the general theorem calls for some machinery
that's a little more intricate than we like.

\begin{definition}
Let $L$ be a Latin square. Define its \emph{encoding matrix}
M = $\E(L)$ by 
\begin{equation*}
M_{ij} = x \Longleftrightarrow L_{jx} = i.
\end{equation*}
Thus $M_{ij}$ is the index of the column of $L$ in which the entry $i$
occurs  on row $j$:
\begin{equation*}
L_{j, M_{i,j}} = i \Longleftrightarrow M_{L_{i,j},i} = j.
\end{equation*}
\end{definition}

\begin{example}
If 
\begin{equation*}
L = 
\begin{bmatrix} 
\word{A}&\word{B}&\word{C}\\
\word{B}&\word{C}&\word{A}\\
\word{C}&\word{A}&\word{B}
\end{bmatrix} 
\end{equation*}
then 
\begin{equation*}
M = \E(L) = \begin{bmatrix} 1& 3 & 2 \\ 2 & 1 & 3 \\ 3 & 2 & 1\end{bmatrix}  .
\end{equation*}
\end{example}

\begin{theorem}\label{thm:Lw} Suppose $P$ partitions $[m]$ into $b$
blocks and $L$ is a normalized Latin square of size $b$.
\begin{enumerate}
\item If $P$ is $r$-regular, then $L(P)$ is $(r\!+\!1)$-regular.
\item If the encoding matrix $M = \E(L)$ is invertible and $L(P)$ is
$(r\!+\!1)$-regular then $P$ is $r$-regular. 
\item If $\E(L)$ is not invertible, then there exist partitions $P$
such that $L(P)$ is $1$-regular but $P$ is not $0$-regular. 
\end{enumerate}
\end{theorem}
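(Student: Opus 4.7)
The plan is to derive a master formula for the power sums of $L(P)$'s blocks in terms of those of $P$, and then read off all three parts. First I would unwind $L(w)$ to see that the letter $a_i$ occurs in $\pi_j(w)$ precisely at the positions of $\pi_j^{-1}(a_i)$ in $w$, so the $i$-th block of $L(P)$ is $T_i = \bigsqcup_{j=1}^{b} ((j-1)m + S_{\pi_j^{-1}(i)})$. Writing $p_\ell(i) := \sum_{x\in S_i} x^\ell$ and $Q_i(k) := \sum_{t \in T_i} t^k$, binomial expansion followed by the substitution $\ell' = \pi_j^{-1}(i)$ --- which by the definition of $M = \E(L)$ is equivalent to $j = M_{i\ell'}$, and which is a bijection of $\{1,\ldots,b\}$ for each fixed $i$ because $L$ is a Latin square --- will yield
\[
Q_i(k) = \sum_{\ell=0}^{k} \binom{k}{\ell}\, m^{k-\ell} \sum_{\ell'=1}^{b} (M_{i\ell'}-1)^{k-\ell}\, p_\ell(\ell').
\]
Two observations will drive the rest: (a) the $\ell = k$ term collapses to $\sum_{\ell'} p_k(\ell')$ and is always $i$-independent; (b) each row of $M$ is a permutation of $\{1,\ldots,b\}$ (from the same Latin-square input), so $\sum_{\ell'} f(M_{i\ell'})$ is $i$-independent for every $f$, and in particular $M\vec{1} = \tfrac{b(b+1)}{2}\vec{1}$, where $\vec{1}$ denotes the all-ones vector.

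Part (1) will then be immediate: if $P$ is $r$-regular, each $p_\ell(\ell')$ with $\ell \le r$ factors out as a constant and (b) makes the corresponding term of $Q_i(k)$ $i$-independent, while the remaining $\ell = k = r+1$ term is handled by (a); hence $Q_i(k)$ is $i$-independent for every $k \le r+1$.

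For part (2) I would induct on $r$. Under the inductive hypothesis $P$ is $(r-1)$-regular, so $p_\ell$ is constant in the block index for $\ell \le r-1$; in the formula for $Q_i(r+1)$ the $\ell \le r-1$ terms are then $i$-independent by (b), the $\ell = r+1$ term by (a), and the remaining $\ell = r$ term reduces, modulo an $i$-free constant, to $(r+1)m\,(MV_r)_i$ with $V_r = (p_r(1),\ldots,p_r(b))^T$. Constancy of $Q_i(r+1)$ forces $MV_r \propto \vec{1}$; combined with $M\vec{1} \propto \vec{1}$ and the invertibility of $M$, this yields $V_r \propto \vec{1}$, completing the induction.

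For part (3), if $M$ is singular then $\ker M$ is nontrivial, and since $M\vec{1}\ne 0$ I can pick an integer $\vec{v}\in \ker M$ not proportional to $\vec{1}$. Setting $\vec{n} = N\vec{1} + \vec{v}$ for $N$ large enough that every entry is a positive integer, and taking $P$ to be any partition of $[m]$ (with $m = \sum_\ell n_\ell$) whose blocks have sizes $n_\ell$, $P$ fails to be $0$-regular because the $n_\ell$ are not all equal, whereas $MV_0 = M\vec{n}\propto \vec{1}$ is (by the master formula at $k=1$) exactly the condition for $L(P)$ to be $1$-regular, whose $0$-regularity is automatic from $|T_i| = m$. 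The main obstacle throughout is the reindexing bookkeeping and the clean separation of terms in the master formula; once it is in hand, parts (2) and (3) become mirror-image consequences of the linear algebra of $M$.
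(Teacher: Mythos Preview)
Your proposal is correct and follows essentially the same route as the paper: derive a master identity expressing the block power sums of $L(P)$ as sums, over the inner block index, of $(M_{i\ell'}-1)^{k-\ell}$ times the power sums of $P$, then use that the top term is automatically $i$-free and that the linear algebra of $M$ controls the next-to-top term. Your observation (b) --- that each row of $M$ is a permutation of $[b]$ --- is a clean way to package what the paper uses more implicitly (constancy of $p_\ell$ for part (1), and that $\vec{1}$ is an eigenvector of $M$ for part (2)); and your kernel-vector construction for part (3) is exactly the paper's column-dependence argument phrased in dual language.
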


\begin{proof}
Let $w=w_1w_2\ldots w_m$ be the word corresponding to $P$ 
on the alphabet $\A = \{a_1, a_2, \ldots, a_b\}$.  
For $j \geqslant 0$ and $x \in \A$ let
\begin{equation*}
S_{w,x}^{(j)} = \sum \{ t^j \mid w_t = x, 1 \leqslant t \leqslant
m\} .
\end{equation*}
Then $P$ is $r$-regular if and only if for every $j=0,\ldots, r$, 
the sum $S_{w,x}^{(j)}$ is the same for all $x \in \A$.

Then
\begin{align*}
S_{L(w), x}^{(j)} = & \sum \left\{ t^j \; | \; L(w)_t = x, 1
\leqslant t \leqslant bm\right\}  \\ 
= & \sum_{k=0}^{b-1} \left[ \sum \left\{(km+ t)^j \; | \;
L(w)_{km+t} = x, 1  \leqslant t \leqslant  m\right\} \right] \\
= & \sum_{k=0}^{b-1} \left[ \sum \left\{(km+ t)^j \; | \;
\pi_{k+1}(w_t) =x,   1 \leqslant t \leqslant  m\right\} \right] \\
= & \sum_{k=0}^{b-1} \left[ \sum \left\{ \sum_{i=0}^j \binom{j}{i}
(km)^{j-i}t^i \; | \; w_t =\pi_{k+1}^{-1}(x),   1 \leqslant t
\leqslant  m\right\} \right]  \\ 
= & \sum_{i=0}^j  \sum_{k=0}^{b-1} \binom{j}{i} (km)^{j-i}
S_{w,\pi_{k+1}^{-1}(x)}^{(i)} \; .
\end{align*}

Setting $x = a_s \in \A$,
\begin{equation*}
\pi_{k+1}^{-1} (a_s) = a_q \Longleftrightarrow s = \pi_{k+1}(q)
\Longleftrightarrow L_{q, k+1} = s \Longleftrightarrow k+1 =
M_{sq}.
\end{equation*}
Hence 
\begin{equation}\label{eq:long_eq}
\begin{aligned}
S_{L(w), a_s}^{(j)} & =   \sum_{q=1}^{b} S_{w, a_q}^{(j)} +  jm \sum_{q=1}^{b} (M_{sq}-1) S_{w, a_q}^{(j-1)} 
\\ & + \sum_{i=0}^{j-2}  \sum_{k=0}^{b-1} \binom{j}{i} (km)^{j-i} S_{w,\pi_{k+1}^{-1}(a_s)}^{(i)}   \\
& =  X(m,j) 
 + jm \sum_{q=1}^{b} M_{sq}S_{w, a_q}^{(j-1)}  \\
 & + \sum_{i=0}^{j-2}  \sum_{k=0}^{b-1} \binom{j}{i} (km)^{j-i} S_{w,\pi_{k+1}^{-1}(a_s)}^{(i)}\; , 
\end{aligned}
\end{equation}
where
\begin{equation*}
X(m,j) = \sum_{k=1}^m (k^j - jmk^{j-1})
\end{equation*}
is independent of $w$ and $s$.

If $w$ is $r$-regular then for every $i=0,\ldots, r$, the sum
$S_{w,y}^{(i)}$ is independent of $y$. Then  for all $j = 0, \ldots,
r+1$, the sum  $S_{L(w), a_s}^{(j)}$ does not depend on
$a_s$, which means that $L(w)$ has regularity $r+1$.

To prove (2), suppose that $L(w)$ is $(r\!+\!1)$-regular and $M$ is
invertible. 

There's nothing to prove if $r=0$, so we start with $r=1$. 

Let $Y_{w}^{(j)}$ be the column vector with entries $S_{w,x}^{(j)}$ for 
$x \in \A$ and $E$ the column vector with $b$ entries, all equal to 1. 
Then \eqref{eq:long_eq} implies
\begin{equation*}
Y_{L(w)}^{(1)} -X(m,1) E = mM Y_w^{(0)}\; .
\end{equation*}
If $L(w)$ is 1-regular, then the left hand side 
is a multiple of $E$. Since $E$ is an eigenvector of $M$, 
if $M$ is invertible, then the right hand side must also be a 
multiple of $E$, which shows that $w$ is 0-regular. Induction 
on $r$ using the same argument completes the proof of the second
statement.

For (3), suppose that $M$ is not invertible. Then its columns are
linearly dependent, so we can find integers $c_1, \ldots, c_b$
such that  
\begin{equation*}
c_1 \text{Col}_1(M) + \dotsb + c_b \text{Col}_b(M) = 0 \; .
\end{equation*}
Since the entries of $M$ are strictly positive, there will be both strictly 
positive and strictly negative values among $c_1, \ldots, c_b$. 
Pick a positive integer $h$ such that all the values $h+c_1, \ldots,
h+c_b$ are  non-negative and consider any word $w$ with $h+c_1$
letters $a_1$, $h+c_2$  letters $a_2$, and so on. Then $w$ is not
0-regular, but $L(w)$ is 1-regular. 
\end{proof}

For example, let $L$ be the Latin square 
\begin{equation}\label{eq:klein}
L= \begin{bmatrix} 
\word{A} &\word{B} &\word{C} &\word{D} \\
\word{B} &\word{A} &\word{D} &\word{C} \\
\word{C} &\word{D} &\word{A} &\word{B}  \\ 
\word{D} &\word{C} &\word{B} &\word{A}  
\end{bmatrix}
\simeq \begin{bmatrix} 1&2&3&4\\2&1&4&3\\3&4&1&2 \\ 4&3&2&1
\end{bmatrix} \; ,
\end{equation}
corresponding to the multiplication table for the Klein group 
$\mathbb{Z}_2 \times \mathbb{Z}_2$. In this example the matrix $\E(L)$ is the same as $L$ and is not
invertible. An example of a linear relation among the columns of $\E(L)$
is 
\begin{equation*}
\text{Col}_1 - \text{Col}_2 - \text{Col}_3 + \text{Col}_4 = 0\; ,
\end{equation*}
with coefficients $(1, -1, -1, 1)$ and a positive translate $(2, 0, 0,
2)$. Therefore any  
word $w$ with two A's and two D's generates a 1-regular $L(w)$,
even if  $w$ is not 0-regular.

\begin{enumerate}
\item $L(\word{ADAD})$ is 1-regular, but $\word{ADAD}$ is not 0-regular. 
\item $L(\word{BCCBADDA})$ is 2-regular but $\word{BCCBADDA}$ is only
0-regular. 
\end{enumerate}

When we first understood the first assertion of Theorem~\ref{thm:Lw}
we hoped it would generate all the solutions to our particular
Prouhet-Tarry-Escott problems. The third assertion dashed those hopes,
so we started to search for other constructions. You can read about
that in the next section. We close this one with some observations
providing examples where  $\E(L)$ is singular or invertible.

Notice that $\E$ has order three: $\E(\E(\E(L)))=L$ because
\begin{equation*}
\E(\E(\E(L)))_{i,j} = x \Leftrightarrow  \E(\E(L))_{j,x} = i
\Leftrightarrow \E(L)_{x,i} = j  
\Leftrightarrow L_{i,j}=x \;.
\end{equation*}
This periodicity allows us to reduce the problem of finding
Latin squares for which $\E(L)$ is singular or invertible to finding
Latin squares with those properties.

\begin{theorem}
For every positive integer $n$ there exist invertible Latin squares of
size $n$. 
\end{theorem}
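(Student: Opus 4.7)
The plan is to exhibit, for every $n$, an explicit invertible Latin square of size $n$. My candidate is the Cayley table of the cyclic group $\Z/n\Z$, normalized so that $L_{ij} = ((i+j-2) \bmod n)+1$. By the periodicity observation $\E^3 = \id$ just noted, any matrix-invertible Latin square $L$ gives, via $\E^2(L)$, a Latin square whose encoding is invertible, so the theorem really comes down to finding matrix-invertible Latin squares.

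First I would note that $L$ is persymmetric: $L_{ij}$ depends only on $i+j$. So if $P$ is the permutation matrix that reverses row order (that is, $P_{ij} = 1$ exactly when $i+j=n+1$), then $\widetilde L := PL$ satisfies $\widetilde L_{ij} = ((j-i-1) \bmod n)+1$, which depends only on $(j-i) \bmod n$. In other words, $\widetilde L$ is a standard circulant with first row $(n,1,2,\ldots,n-1)$, and since $P$ is invertible it suffices to prove that $\widetilde L$ is invertible.

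Next I would invoke the classical eigenvalue formula for circulants: the eigenvalues of $\widetilde L$ are $\hat c(\omega^k)$ for $k=0,1,\ldots,n-1$, where $\omega = e^{2\pi i/n}$ and $\hat c(x) = n + \sum_{j=1}^{n-1} j\, x^j$ is the generating polynomial of the first row. For $k=0$, $\hat c(1) = n + (1+2+\cdots+(n-1)) = n(n+1)/2 \neq 0$. For $k \neq 0$, setting $\zeta = \omega^k$, I would use the identity $\sum_{j=1}^{n-1} j\zeta^j = n/(\zeta-1)$, obtained by differentiating $(x^n-1)/(x-1) = 1+x+\cdots+x^{n-1}$ and evaluating at $\zeta$, to collapse $\hat c(\zeta)$ to $n\zeta/(\zeta-1)$; this is nonzero since $|\zeta|=1$ and $n\neq 0$.

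The only delicate step is the summation identity for $\sum_{j=1}^{n-1} j\zeta^j$; once that is in hand every eigenvalue of $\widetilde L$ is nonzero, so $\det L = \pm \det \widetilde L \neq 0$ and the cyclic Cayley table is invertible for every $n$.
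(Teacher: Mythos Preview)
Your proof is correct and follows essentially the same route as the paper: both take the shifted Cayley table of $\Z_n$, reverse the rows to obtain a circulant with first row $(n,1,2,\ldots,n-1)$, and argue invertibility from circulant theory. The paper simply asserts that $|\det M_n| = (n+1)n^{n-1}/2$ without any justification; your eigenvalue computation (via $\hat c(1)=n(n+1)/2$ and $\hat c(\zeta)=n\zeta/(\zeta-1)$ for $\zeta\neq 1$) actually supplies the missing argument and, upon taking the product of eigenvalues, recovers exactly that determinant value.
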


\begin{proof}
Construct a Latin square $M_n$ of size $n$ by replacing $k$ by $k+1$
in the usual addition table of the group $\Z_n =\{ 0, 1,
\ldots,n\!-\!1\}$. 
After reversing the order of rows the corresponding matrix
becomes a circulant matrix with first row $(n, 1,2,\ldots, n\!-\!1)$, and
\begin{equation*}
| \det{M_n} | = \frac{(n+1)n^{n-1}}{2}\neq 0\; ,
\end{equation*}
hence $M_n$ is invertible.
\end{proof}
 
For example, when $n=6$ the Latin square $M_6$ is
\begin{equation}\label{eq:z6}
\begin{array}{c|cccccc}
& 0 & 1 & 2 & 3 & 4 & 5 \\
\hline
0 & 1 & 2 & 3 & 4 & 5 & 6 \\
1 & 2 & 3 & 4 & 5 & 6 & 1 \\
2 & 3 & 4 & 5 & 6 & 1 & 2 \\
3 & 4 & 5 & 6 & 1 & 2 & 3 \\
4 & 5 & 6 & 1 & 2 & 3 & 4 \\
5 & 6 & 1 & 2 & 3 & 4 & 5
\end{array} 
\end{equation}

\begin{theorem}\label{thm:noninvertible}
Let $n$ be a composite positive integer. Then there exist singular
Latin squares $L$ of size $n$. 
\end{theorem}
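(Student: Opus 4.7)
The plan is to give, for every composite $n$, an explicit $n \times n$ Latin square $L$ together with a nontrivial linear relation among four of its columns, which forces $\det L = 0$. Factor $n = ab$ with $a, b \geqslant 2$, fix any $a \times a$ Latin square $\sigma$ on the alphabet $\{0, 1, \ldots, a\!-\!1\}$, and fix any $b \times b$ Latin square $T$ on $\{1, 2, \ldots, b\}$. Index rows and columns of $L$ by pairs and define
\begin{equation*}
L_{(i-1)b+r,\;(j-1)b+s} \;=\; b\,\sigma(i,j) + T(r,s), \qquad 1 \leqslant i,j \leqslant a,\ \ 1 \leqslant r,s \leqslant b.
\end{equation*}
Each entry lies in $\{1, \ldots, n\}$, so $L$ has the right entry set.

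To confirm $L$ is a Latin square, I would fix a row $(i, r)$ and let $(j, s)$ vary. Since $\sigma$ is Latin in row $i$, the value $\sigma(i, j)$ visits each element of $\{0, \ldots, a\!-\!1\}$ exactly once as $j$ ranges, partitioning the entries of row $(i, r)$ into the $a$ consecutive intervals $\{b\sigma + 1, \ldots, b\sigma + b\}$; within each such interval, $T(r, s)$ runs through $\{1, \ldots, b\}$ as $s$ varies, since $T$ is Latin in row $r$. The symmetric argument (swap the roles of rows and columns) shows every column of $L$ is a permutation of $\{1, \ldots, n\}$.

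The key observation for singularity is that for any fixed block column $j$ and any $s_1 \neq s_2$,
\begin{equation*}
L_{(i-1)b+r,\;(j-1)b+s_2} - L_{(i-1)b+r,\;(j-1)b+s_1} \;=\; T(r, s_2) - T(r, s_1),
\end{equation*}
which depends only on $r, s_1, s_2$ and is \emph{independent} of both $i$ and $j$. Choosing any $j_1 \neq j_2$ in $\{1, \ldots, a\}$ and any $s_1 \neq s_2$ in $\{1, \ldots, b\}$ (both possible because $a, b \geqslant 2$), the four distinct columns of $L$ indexed by $(j_k\!-\!1)b + s_\ell$ satisfy
\begin{equation*}
L_{*,\,(j_1\!-\!1)b + s_1} \,-\, L_{*,\,(j_1\!-\!1)b + s_2} \,-\, L_{*,\,(j_2\!-\!1)b + s_1} \,+\, L_{*,\,(j_2\!-\!1)b + s_2} \;=\; 0,
\end{equation*}
a nontrivial linear relation among columns, so $\det L = 0$.

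I expect no substantive obstacle: the only thing requiring care is the two-index bookkeeping of rows and columns in block form. As a sanity check, the case $a = b = 2$ with $\sigma$ and $T$ each equal to the unique normalized $2 \times 2$ Latin square reproduces the Klein Latin square of \eqref{eq:klein}, and the relation above becomes exactly the dependency $\mathrm{Col}_1 - \mathrm{Col}_2 - \mathrm{Col}_3 + \mathrm{Col}_4 = 0$ already noted there.
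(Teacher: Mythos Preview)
Your proof is correct and follows essentially the same approach as the paper. The paper uses specifically the addition table of $\Z_a \times \Z_b$ (with the enumeration $(i,j)\mapsto j+1+bi$) and exhibits the relation $\text{Col}_1 - \text{Col}_2 - \text{Col}_{b+1} + \text{Col}_{b+2} = 0$; your construction is a mild generalization, replacing the cyclic-group addition tables by arbitrary Latin squares $\sigma$ and $T$ in the same block (Kronecker-type) pattern, and your four-column relation specializes to theirs when $j_1=s_1=1$, $j_2=s_2=2$.
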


\begin{proof}
Let $a,b$ be integers such that $n=ab$ and $1 < a\leqslant
b$. Consider the addition table  $M$ of the group $\Z_a \times \Z_b$.
Enumerate the elements so that $(i,j)$ is the $(j\!+\!1\!+\!bi)^{th}$.
Then
\begin{equation*}
\text{Col}_1 - \text{Col}_2 - \text{Col}_{b+1} + \text{Col}_{b+2} = 0\; ,
\end{equation*}
hence $M$ is not invertible.
\end{proof}

The Latin square \eqref{eq:klein} corresponds to $a=b=2$. When $a=2$,
$b=3$ we obtain the singular normalized Latin square
\begin{equation}\label{eq:z2z3}
\begin{array}{c|cccccc}
& (0,0) & (0,1) & (0,2) & (1,0) & (1,1) & (1,2) \\
\hline
(0,0) & 1 & 2 & 3 & 4 & 5 & 6 \\
(0,1) & 2 & 3 & 1 & 5 & 6 & 4 \\
(0,2) & 3 & 1 & 2 & 6 & 4 & 5 \\
(1,0) & 4 & 5 & 6 & 1 & 2 & 3 \\
(1,1) & 5 & 6 & 4 & 2 & 3 & 1 \\
(1, 2) & 6 & 4 & 5 & 3 & 1 & 2
\end{array} 
\end{equation}
where $\text{Col}_1 +\text{Col}_{5} =  \text{Col}_2 + \text{Col}_{4}$.

\begin{remark}
Note that whether the addition table of a group is an invertible matrix or not 
depends on the order in which the elements are listed. Even though $\Z_6$ 
and $\Z_2 \times Z_3$ are isomorphic groups, the reordering of
elements that maps
\eqref{eq:z6} to \eqref{eq:z2z3} does not correspond to a group 
isomorphism.
\end{remark}

What happens when $n$ is prime? There are no singular Latin squares of
sizes 2 and 3 and a computer search indicates that all Latin squares
of size 5 are invertible, too. However, for $n=7$, the Latin square 
\begin{equation*}
\begin{bmatrix}
1&2&3&4&5&6&7\\
2&7&6&5&4&3&1\\
3&6&7&2&1&4&5\\
4&5&2&1&6&7&3\\
5&1&4&7&3&2&6\\
6&4&1&3&7&5&2\\
7&3&5&6&2&1&4
\end{bmatrix}
\end{equation*}
is singular.

\section{Changing the shapes of solutions}

In this section we study regularity-preserving operations on words.

\begin{theorem}\label{thm:swap}
Swap. Let $v$, $w$, $x$, $y$ and $z$ be words on a $b$-letter alphabet
such that $v$ and $w$ are ($r\!-\!1$)-regular and the concatenation
$xvywz$ is $r$-regular. Suppose either 
\begin{itemize}
\item $|v| = |w|$, or
\item $y$ is ($r\!-\!1$)-regular (possibly empty).
\end{itemize}
Then $xwyvz$ is also $r$-regular.
\end{theorem}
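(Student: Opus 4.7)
The plan is to verify the $r$-regularity of $xwyvz$ by direct computation. Writing $\alpha=|x|$, $p=|v|$, $q=|w|$, $s=|y|$, and $S_u^{(i)}(a)=\sum\{t^i\mid u_t=a,\ 1\leqslant t\leqslant |u|\}$ for a word $u$ and letter $a$ (with positions measured inside $u$), I express the $k$-th power sum for letter $a$ in $xvywz$ and in $xwyvz$ as a sum of five pieces, one per block, and show that their difference $D_a^{(k)}$ is independent of $a$ for every $0\leqslant k\leqslant r$. Since $xvywz$ is $r$-regular by hypothesis, this forces $xwyvz$ to be $r$-regular too.

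First I observe that $x$ occupies positions $1,\ldots,\alpha$ in both words, and $z$ occupies positions $\alpha+p+s+q+1,\ldots$ in both words, so their contributions cancel in $D_a^{(k)}$. The remaining blocks get the following absolute shifts: $v$ shifts by $\alpha$ in $xvywz$ and by $\alpha+q+s$ in $xwyvz$; $w$ shifts by $\alpha+p+s$ in $xvywz$ and by $\alpha$ in $xwyvz$; $y$ shifts by $\alpha+p$ in $xvywz$ and by $\alpha+q$ in $xwyvz$. Thus
\begin{equation*}
D_a^{(k)} = \Delta_v + \Delta_w + \Delta_y,
\end{equation*}
where each $\Delta_\bullet$ has the form $\sum_{t:\,\bullet_t=a}[(c_1+t)^k-(c_0+t)^k]$ with constants $c_0,c_1$ depending only on $\alpha,p,q,s$.

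Expanding by the binomial theorem, the leading $t^k$ terms (the $i=k$ contribution, which has coefficient $1$ in both $(c_0+t)^k$ and $(c_1+t)^k$) cancel, so each $\Delta_\bullet$ becomes a linear combination, with coefficients depending only on $\alpha,p,q,s,k$, of $S_\bullet^{(i)}(a)$ for $0\leqslant i\leqslant k-1$. Fixing $k\leqslant r$ we have $i\leqslant r-1$, so the $(r\!-\!1)$-regularity of $v$ and $w$ makes $S_v^{(i)}(a)$ and $S_w^{(i)}(a)$ independent of $a$. In the first case $p=q$, the two shifts of $y$ coincide ($\alpha+p=\alpha+q$), so $\Delta_y=0$ identically and no hypothesis on $y$ is needed. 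In the second case the assumption that $y$ is $(r\!-\!1)$-regular makes $S_y^{(i)}(a)$ independent of $a$ as well. Either way $D_a^{(k)}$ is independent of $a$, completing the proof.

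The main obstacle is simply the bookkeeping: tracking the three absolute shifts carefully and confirming that the top-degree $t^k$ terms cancel inside each $\Delta_\bullet$, so that only indices $i\leqslant r-1$ appear and the $(r\!-\!1)$-regularity hypotheses bite. No tool beyond the binomial expansion used in the proof of Theorem~\ref{thm:Lw} is required.
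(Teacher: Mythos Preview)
Your argument is correct: the shifts are exactly right, the cancellation of the $t^k$ terms in each $\Delta_\bullet$ drops the top index to $k-1\leqslant r-1$, and the two bulleted hypotheses are used precisely where you say. The paper itself omits the proof entirely (``Left to the reader''), so there is nothing to compare against; your direct binomial computation is exactly the routine verification the authors evidently had in mind.
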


\begin{proof}
Left to the reader.
\end{proof}

\begin{theorem}\label{thm:alltwo}
There are $1$-regular words of length $n$ on a two letter alphabet if
and only if $n = 4k$. In that case
every element of $\PTE(4k, 2, 1)$ can be obtained from the word
\begin{equation*}
w = \word{A}^k \word{B}^{2k} \word{A}^k
\end{equation*}
by a sequence of swaps interchanging subwords \word{AB} and \word{BA}.
\end{theorem}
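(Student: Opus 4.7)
The necessity of $n = 4k$ is routine: equal block sizes force $n$ to be even, and equality of the two block sums to $n(n+1)/4$ requires $4 \mid n(n+1)$, hence $4 \mid n$. For sufficiency I verify that $w_0 = \word{A}^k \word{B}^{2k} \word{A}^k$ is $1$-regular: its \word{A}-positions $S_0 = \{1, \dots, k\} \cup \{3k+1, \dots, 4k\}$ sum to $k(k+1) + 3k^2 = 4k^2 + k$, which is exactly half the total $2k(4k+1)$.

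For the connectivity claim the plan is to use a monotone potential. The prescribed swap is an instance of Theorem~\ref{thm:swap} (take $v = \word{AB}$, $w = \word{BA}$, $r = 1$): both are $0$-regular and of equal length, so interchanging non-overlapping occurrences preserves $1$-regularity. Let $\phi(S) = \sum_{a \in S} a^2$ for the \word{A}-set $S$ of a $1$-regular word. An \emph{expansive} swap---exchanging a \word{BA} at positions $(i, i+1)$ with an \word{AB} at positions $(j, j+1)$ where $i+1 < j$, thereby moving the \word{A}'s at $i+1$ and $j$ outward to $i$ and $j+1$---changes $\phi$ by $i^2 - (i+1)^2 + (j+1)^2 - j^2 = 2(j-i) \geq 4$, so it strictly increases $\phi$. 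Since swaps are reversible, it suffices to prove that from any $1$-regular word iterated expansive swaps terminate at $w_0$; the target word is then reached from $w_0$ by running the sequence in reverse.

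The heart of the argument is a characterization: I would show that $w_0$ is the unique $1$-regular word admitting no expansive swap. The no-swap condition is equivalent to $\max\{j : w_j w_{j+1} = \word{AB}\} \leq \min\{i : w_i w_{i+1} = \word{BA}\} + 1$, and a run analysis leaves two structural possibilities: either the three-run block $\word{A}^a \word{B}^{2k} \word{A}^c$ with $a + c = 2k$ (when the inequality is strict), or the pattern $\word{A}^a \word{B}^{b_1} \word{A} \word{B}^{b_2} \word{A}^c$ with $a + c = 2k - 1$, $b_1 + b_2 = 2k$, $b_1, b_2 \geq 1$ (the equality case, featuring an isolated \word{A} in a \word{BAB} window). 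In the first case the $1$-regular sum condition $\sum S = k(4k+1)$ forces $a = c = k$, giving $w_0$. In the second case a direct sum-of-positions calculation yields the linear relation $b_1 = 2k(a + 1 - k)$, and the feasibility range $1 \leq b_1 \leq 2k - 1$ simultaneously forces $a \geq k$ and $a \leq k - 1$, a contradiction. Hence the iteration terminates at $w_0$.

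The main obstacle I anticipate is this second-case exclusion---verifying that the four- and five-run pattern $\word{A}^a \word{B}^{b_1} \word{A} \word{B}^{b_2} \word{A}^c$ never yields a $1$-regular word. Once that is in place, Theorem~\ref{thm:swap} guarantees that each swap preserves $1$-regularity and the boundedness of $\phi$ above by $\sum_{j=1}^{4k} j^2$ guarantees termination, together yielding the desired swap-connectivity.
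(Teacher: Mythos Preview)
Your argument is correct and follows essentially the same route as the paper's proof: perform \word{AB}/\word{BA} swaps in a fixed direction until none remain, classify the resulting terminal words (the paper writes them uniformly as $\word{A}^{p-1}\word{B}^{q}\word{A}\word{B}^{2k-q}\word{A}^{2k-p}$, which is your two cases combined), and check that the $1$-regularity equation forces $w_0$. The only cosmetic difference is the termination potential---you use $\phi(S)=\sum_{a\in S}a^2$, the paper uses lexicographic order---and both yield the same terminal set.
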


\begin{proof}
Let $v$ be a $1$-regular word of length $n$ on a two letter
alphabet. Then $n$ is even and since $v$ is $1$-regular, the block sums are equal, so 
\begin{equation*}
2(\text{block sum}) = \Sigma[n] = \frac{n(n+1)}{2} = \frac{n}{2} \times \text{odd}\; .
\end{equation*}
Then $n/2$ must also be even.

Conversely, it is clear that $w$ is a 1-regular word of length $n=4k$.

If the $1$-regular word $v = \word{\ldots BA\ldots AB\ldots} $
contains a subword $\word{BA}$ to 
the left of an $\word{AB}$ then Theorem~\ref{thm:swap} says
$v' = \word{\ldots AB\ldots BA\ldots} $ is also 1-regular and is
strictly less that $v$ in  lexicographic order. We can repeat this
procedure only a finite number of times, 
until we reach a 1-regular word $z$ with no subwords $\word{BA}$ to
the left of an $\word{AB}$.
Then $z$ is of the form 
$\word{A}^{p-1} \word{B}^{q} \word{A}\word{B}^{2k-q} \word{A}^{2k-p}$
for some $1 \leqslant p \leqslant 2k$ and $0 \leqslant q \leqslant 2k$. 
A straightforward computation shows that such a word is 1-regular if and only if 
$q=2k(p-k)$, hence $q=0, p=k$ or $q=2k, p=k+1$. Both imply $z=w$.
Reversing the sequence
of swaps changes $w$ into $v$.
\end{proof}

Swapping rearranges a word without changing either length or
regularity. Concatenation increases length, while preserving
regularity:

\begin{lemma}\label{lem:concatenation}
If words $v$ and $w$ correspond to $r$-regular partitions on a
$b$-letter alphabet then so does their concatenation $vw$.
\end{lemma}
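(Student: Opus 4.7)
The plan is to reduce the concatenation statement to a combination of affine invariance (Lemma~\ref{lem:affine}) and a direct observation about summing power sums blockwise.

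First, I would translate from words to partitions. If $v$ has length $m$ and $w$ has length $n$, then the partition corresponding to $vw$ has blocks $T_s = V_s \cup (m + W_s)$, where $\pt{V_1,\ldots,V_b}$ is the partition of $[m]$ coming from $v$, where $\pt{W_1,\ldots,W_b}$ is the partition of $[n]$ coming from $w$, and where $m + W_s = \{m + t : t \in W_s\}$. The key observation is that $m + W_s$ contributes to block $s$ of $vw$ because positions $m+1,\ldots,m+n$ of $vw$ carry exactly the letters of $w$ in the same order.

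Next, I would invoke Lemma~\ref{lem:affine} with $a=m$ and $n=1$ to conclude that $\pt{m+W_1, \ldots, m+W_b}$ is an $r$-regular partition of $\{m+1,\ldots,m+n\}$. Then for each $k \in \{0,1,\ldots,r\}$ and each block index $s$,
\begin{equation*}
\sum_{x \in T_s} x^k \;=\; \sum_{x \in V_s} x^k \;+\; \sum_{y \in m+W_s} y^k.
\end{equation*}
Each of the two sums on the right is, by hypothesis, independent of $s$, so their sum is also independent of $s$. This holds for every $k \leqslant r$, which is exactly $r$-regularity of the partition corresponding to $vw$.

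There is no real obstacle here: the lemma is essentially bookkeeping, and the only subtlety is making sure that the shifted partition of $w$ remains $r$-regular, which is precisely what Lemma~\ref{lem:affine} guarantees. (One could equivalently expand $(m+t)^k$ by the binomial theorem and argue that each lower-order power sum $\sum_{t \in W_s} t^i$ for $i \leqslant k$ is independent of $s$, but using the already-established affine invariance is cleaner.) I would end with a brief remark that this lemma will be a basic building block for later combinatorial constructions of Prouhet-Tarry-Escott solutions.
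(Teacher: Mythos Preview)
Your proof is correct and follows essentially the same route as the paper: shift the partition of $w$ by $m$ via Lemma~\ref{lem:affine}, observe that the blocks of $vw$ are the unions of corresponding blocks, and conclude that the power sums add. The only cosmetic point is a notational clash --- you use $n$ for the length of $w$ and then write ``$a=m$ and $n=1$'' for the parameters of Lemma~\ref{lem:affine}; renaming one of them would avoid ambiguity.
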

\begin{proof}
Let $m$ be the length of $v$ and $n$ the length of $w$.
Lemma~\ref{lem:affine} shows that shifting word $w$ right by $m$
gives an $r$-regular partition of the integers between $n\!+\!1$ and 
$n\!+\!m$. 
The blocks of the partition corresponding to $vw$ are the unions of
corresponding blocks of $v$ and $w$. Since the component blocks from
each of $v$ and $w$ have the same sums of powers up to $r$, so do
their unions. 
\end{proof}

Splitting is the inverse of concatenation.

\begin{definition} ($k$-split) 
Let $w$ be an $r$-regular word on a $b$-letter alphabet -- that is, $w
\in \PTE(m,b,r)$. A $k$-split of $w$ is a list of $k$-regular
words $(w_1, w_2, \ldots, w_t)$ such that $w = w_1w_2 \cdots w_t$.
\end{definition}

The words $w_i$ need not have the same length. 
Lemma~\ref{lem:concatenation} implies that if $w$ can be $k$-split,
then it is $k$-regular.

\begin{example}
We can $k$-split the familiar $2$-regular \word{ABBABAAB} several ways
-- the blanks illustrate the subword boundaries:
\begin{equation*}
\word{ABBABAAB} = 
\word{ABBA BAAB} = 
\word{ABBA BA AB} = 
\word{AB BA BA AB} .
\end{equation*}
%
\end{example}

Theorem~\ref{thm:swap} implies that reordering the pieces of an
$(r\!-\!1)$-splitting  of a partition of regularity $r$ does not alter the
regularity below $r$. 

\begin{definition}
Let 
$(w_1, w_2, \ldots, w_t)$ be a list of words of the same length on the
same alphabet. The \emph{shuffle}
\begin{equation*}
w_1 \wedge w_2 \wedge\ldots \wedge w_t
\end{equation*}
of the list is the word $w$ built by concatenating the words built by
concating the $t$ first, second, \ldots letters of the $w_i$.
\end{definition}

\begin{example}
\begin{align*}
\word{AB} \wedge \word{BC} \wedge \word{CA} & = \word{ABC BCA} \\
\word{ABBA} \wedge \word{BAAB} & = \word{ABBABAAB} \\
\word{ABBA} \wedge \word{ABBA} & = \word{AABBBBAA} 
\end{align*}
\end{example}

\begin{theorem}\label{thm:shuffling}
(Shuffling)
The shuffle of $r$-regular words is $r$-regular.
\end{theorem}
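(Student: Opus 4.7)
The plan is to decompose the shuffle into $t$ interlaced copies of the individual $w_i$'s and reduce to an argument in the spirit of Lemma~\ref{lem:concatenation} carried across an affine isomorphism. Suppose each $w_i$ has length $m$, so that $w = w_1 \wedge \cdots \wedge w_t$ has length $tm$. The letters contributed by $w_i$ occupy the positions $\{t(j-1)+i : 1 \leqslant j \leqslant m\}$, which is the image of $[m]$ under the affine map $f_i(j) = tj + (i-t)$.

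First I would invoke Lemma~\ref{lem:affine} with $n = t$ and $a = i - t$ to conclude that the partition induced on $f_i([m])$ by $w_i$ is again $r$-regular. Equivalently, for each letter $x \in \A$ and each exponent $k \in \{0,1,\ldots,r\}$, the sum $\sum \{f_i(j)^k : w_{i,j} = x\}$ does not depend on $x$; call its common value $T_i^{(k)}$.

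Next I would observe that the block of the shuffle partition labeled $x$ is precisely the disjoint union over $i = 1, \ldots, t$ of the $x$-blocks of these shifted sub-partitions. Hence for $0 \leqslant k \leqslant r$,
\begin{equation*}
\sum\{p^k : w_p = x\} \;=\; \sum_{i=1}^{t} \sum\{f_i(j)^k : w_{i,j} = x\} \;=\; \sum_{i=1}^{t} T_i^{(k)},
\end{equation*}
which is independent of $x$. This shows that the shuffle is $r$-regular.

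I do not anticipate a serious obstacle here: the argument is structurally identical to the proof of Lemma~\ref{lem:concatenation}, the only difference being that the constituent pieces live on interleaved arithmetic progressions modulo $t$ rather than on consecutive intervals. The crux is that $r$-regularity is preserved under disjoint union of partitions when corresponding blocks are identified by alphabet letter, and the affine embeddings of each $w_i$ into the shuffle are harmless by Lemma~\ref{lem:affine}.
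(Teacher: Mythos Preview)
Your argument is correct and is precisely a fleshed-out version of the paper's one-line proof, which simply says ``Each component appears in the shuffle as an affine shift.'' You have made explicit the affine maps $f_i$ and the disjoint-union step that the paper leaves implicit, but the approach is the same.
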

\begin{proof}
Each component appears in the shuffle as an affine shift.
\end{proof}

Swapping, concatenation and shuffling are all methods of generating
new regular words from old. We have introduced these operations
in hopes that they will help find all the regular words from some
known ones, by analogy with Theorem~\ref{thm:alltwo}. 
There may be interesting questions to ask and answer about the algebra
of these operations -- the ways in which they associate, commute and
distribute.

\section{Existence}

\begin{theorem}\label{prop:5.1}
On a two-letter alphabet, there are $2$-regular words of 
length $n$ $\Longleftrightarrow$ $n = 4k$, with $k \geqslant 2$.
\end{theorem}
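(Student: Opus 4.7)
The plan is to establish both directions separately. For necessity, any $2$-regular word is in particular $1$-regular, so Theorem~\ref{thm:alltwo} already forces the length to be $4k$ for some $k\geq 1$. To rule out $k=1$, I would invoke Theorem~\ref{thm:alltwo} again: the only $1$-regular words of length $4$ correspond (up to swapping letters) to the partition $\{\{1,4\},\{2,3\}\}$, whose sums of squares are $1^2+4^2=17$ and $2^2+3^2=13$, and these disagree. Hence no length-$4$ word is $2$-regular.

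For sufficiency, the strategy is to exhibit two base-case $2$-regular words and extend them to all other admissible lengths by concatenation. The first base case is the familiar Thue-Morse word \word{ABBABAAB} of length $8$, which is $2$-regular as already noted in Example~\ref{ex:3.4}. The second base case is a $2$-regular word of length $12$; I would check by direct computation that the partition
\begin{equation*}
\{\{1,3,7,8,9,11\},\{2,4,5,6,10,12\}\}
\end{equation*}
of $[12]$ satisfies $1+3+7+8+9+11=2+4+5+6+10+12=39$ and $1^2+3^2+7^2+8^2+9^2+11^2=2^2+4^2+5^2+6^2+10^2+12^2=325$, so the corresponding word \word{ABABBBAAABAB} is $2$-regular of length $12$.

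With these two base cases in hand, for any $k\geq 2$ one can write $k=2a+3b$ with non-negative integers $a,b$, since the non-negative integer combinations of $2$ and $3$ hit every integer $\geq 2$. Applying Lemma~\ref{lem:concatenation} to $a$ copies of the length-$8$ word followed by $b$ copies of the length-$12$ word produces a $2$-regular word of total length $8a+12b=4k$, completing the proof.

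The main obstacle is the length-$12$ construction. The Latin-square doubling of Theorem~\ref{thm:Lw} applied with the unique $2\times 2$ Latin square $L_0$ to the $1$-regular word \word{ABBA} yields $2$-regular words only of lengths $8, 16, 32, \dots$, so lengths such as $12, 20, 28, \dots$ cannot be reached that way, and there is no purely formal reduction to Theorem~\ref{thm:alltwo}. Once one ad hoc length-$12$ example is produced (as above), the concatenation step handles every remaining $k\geq 2$ uniformly.
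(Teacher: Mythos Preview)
Your proof is correct and follows essentially the same approach as the paper: invoke Theorem~\ref{thm:alltwo} for necessity, exclude $k=1$ by inspection, and for sufficiency concatenate copies of the $2$-regular words \word{ABBABAAB} and \word{ABABBBAAABAB} after writing $k\geqslant 2$ as $2a+3b$. The only difference is that you supply the explicit verifications (the length-$4$ check and the power-sum computation for the length-$12$ word) that the paper leaves to the reader.
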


\begin{proof}
Suppose there are 2-regular words of length $n$. Theorem~\ref{thm:alltwo} 
implies that $n=4k$, since any 2-regular word is 1-regular. There are no 
2-regular words of length 4, hence $k \geqslant 2$. 

Conversely, suppose $n=4k$ with $k \geqslant 2$. Then $k$ can be written 
as a sum of $2s$ and $3s$, hence some concatenation of copies of the
2-regular words \word{ABBABAAB} and $\word{ABABBBAAABAB}$ 
generate a 2-regular $4k$-letter word.
\end{proof}

The $12$-letter  word $\word{ABABBBAAABAB}$ is a mystery.  
A computation similar to the one following Example~\ref{ex:3.4} shows
it is $2$-regular:
\begin{align*}
X & =  1^2 + 3^2 + 7^2 + 8^2 + 9^2 + 11^2 \\
  & = 1^2 + 3^2 + (1+6)^2 + (2+6)^2 + (3+6)^2 + (2+9)^2 \\
  & = 1^2 + 3^2  + 1^2 + 2(1 \times 6) + 6^2 + 2^2 + 2(2 \times 6) + 6^2 \\
  &  = 3^2 + 2(3 \times 6) + 6^2 + 2^2 + 2(2 \times 9) + 9^2 \\
  & = 2(1^2 + 2^2 + 3^2) + 6(2 + 4 + 6 + 6) + (3\times 6^2 + 9^2)
\end{align*}
while
\begin{equation*}
  Y = 2(1^2 + 2^2 + 3^2) + 6(1+2+3+3+9) + (3\times 3^2 + 2\times 9^2).
\end{equation*}

The word is $2$-regular because these expressions are equal -- term
by term. Why does that happen? 

\begin{theorem}
Let $r \geqslant 2$ and $n=k\cdot 2^r$, with $k \geqslant 2$. 
Then there exist $r$-regular words of length $n$ over a two-letter alphabet.
\end{theorem}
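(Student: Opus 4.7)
The plan is to prove the theorem by induction on $r \geqslant 2$, using the two existing tools: Theorem~\ref{prop:5.1} as the base case, and Theorem~\ref{thm:Lw}(1) applied with the unique normalized Latin square $L_0$ on a two-letter alphabet as the engine that pushes regularity up by one while doubling length.

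For the base case $r=2$, Theorem~\ref{prop:5.1} directly gives $2$-regular words of length $n = k \cdot 2^2 = 4k$ for every $k \geqslant 2$, so there is nothing more to do.

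For the inductive step, suppose the result holds for some $r \geqslant 2$: for every $k \geqslant 2$ there exists an $r$-regular word of length $k \cdot 2^r$ on the alphabet $\{\word{A}, \word{B}\}$. Fix $k \geqslant 2$ and let $w$ be such a word. Since $L_0$ is a normalized Latin square of size $2$, Definition~\ref{def:Lw} yields the word $L_0(w) = w\,\pi_2(w)$ of length $2 \cdot k \cdot 2^r = k \cdot 2^{r+1}$, where $\pi_2$ swaps \word{A} and \word{B}. By Theorem~\ref{thm:Lw}(1), $L_0(w)$ is $(r\!+\!1)$-regular. This produces an $(r\!+\!1)$-regular word of the required length $k \cdot 2^{r+1}$ for every $k \geqslant 2$, closing the induction.

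There is no real obstacle: the statement is calibrated precisely so that each application of $L_0$ takes us from length $k \cdot 2^r$ to length $k \cdot 2^{r+1}$ and raises regularity from $r$ to $r\!+\!1$. The constraint $k \geqslant 2$ is inherited from the base case (length $4$ is impossible for $2$-regularity) and never needs to be revisited, since we do not multiply $k$ along the way. One could equivalently phrase the construction non-recursively by writing $n = k \cdot 2^r$, producing a $2$-regular word of length $4k$ by Theorem~\ref{prop:5.1}, and then applying $L_0$ a total of $r-2$ times.
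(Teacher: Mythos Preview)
Your proof is correct and follows exactly the paper's approach: induction on $r$ with Theorem~\ref{prop:5.1} as the base case and Theorem~\ref{thm:Lw}(1) (applied with $L_0$) for the inductive step. Your version simply spells out the details the paper leaves implicit.
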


\begin{proof}
Induction on $r$. The base case $r=2$ is in
Theorem~\ref{prop:5.1}. The induction step follows from
Theorem~\ref{thm:Lw}. 
\end{proof}

A computer search shows that $\PTE(2, 2, 0)$, $\PTE(4,2,1)$, $\PTE(8,
2, 2)$, $\PTE(16, 
2, 3)$ each contain just one word,  
the initial segment of the Thue-Morse sequence of the corresponding length. 
Moreover, those are the minimal lengths of words with the respective
regularity. 

\begin{conjecture}\label{conj:maxreg}
Suppose $r \geqslant 2$. On a two-letter alphabet, there are
$r$-regular words of length $n$ $\Longleftrightarrow$ $n = k\cdot
2^r$, with $k \geqslant 2$. Moreover, $\PTE(2^{r+1}, 2, r)$ contains
just one word,  
the initial segment of the Thue-Morse sequence of length $2^{r+1}$.
\end{conjecture}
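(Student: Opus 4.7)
The plan is to prove the biconditional by separate sufficiency and necessity arguments, and to settle uniqueness at the minimal length $n=2^{r+1}$ as a third step. For the $\Leftarrow$ direction I would produce two families of ``primitive'' $r$-regular words, of lengths $2\cdot 2^r$ and $3\cdot 2^r$. Iterating $L_0$ on the $0$-regular word \word{AB} and applying Theorem~\ref{thm:Lw}(1) $r$ times yields an $r$-regular word of length $2^{r+1}$ (the Thue-Morse initial segment). Applying $L_0$ a further $r-2$ times to the $12$-letter $2$-regular word \word{ABABBBAAABAB} of Theorem~\ref{prop:5.1} yields an $r$-regular word of length $3\cdot 2^r$. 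Since every integer $k\geq 2$ can be written as $2a+3b$ with $a,b\in\Z_{\geq 0}$, Lemma~\ref{lem:concatenation} lets us concatenate $a$ copies of the first primitive with $b$ copies of the second to realize length $k\cdot 2^r$.

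For the necessity direction, encode $w$ by a sign sequence $\epsilon\colon[n]\to\{\pm 1\}$ and set $F(x)=\sum_{i=1}^{n}\epsilon_i x^i\in\Z[x]$. Then $w$ is $r$-regular iff $(x-1)^{r+1}\mid F(x)$, so write $F(x)=(x-1)^{r+1}g(x)$ with $g\in\Z[x]$. The classical lower bound $n\geq 2^{r+1}$ follows in the standard way: if $F(-1)\neq 0$ then $|F(-1)|=2^{r+1}|g(-1)|\geq 2^{r+1}$, which combined with $|F(-1)|\leq n$ gives the bound. When $F(-1)=0$, we iterate at primitive $2^k$-th roots of unity $\zeta$: the identity $|F(\zeta)|^2=|\zeta-1|^{2(r+1)}|g(\zeta)|^2$ together with $|F(\zeta)|\leq n$ produces the same conclusion after finitely many steps.

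The main obstacle is the divisibility $2^r\mid n$. The naive reduction of $F$ modulo $2$ only gives $(x-1)^{r+2}\mid x^n-1$ in $(\Z/2\Z)[x]$, hence $2^a\geq r+2$ when $n=2^a m$ with $m$ odd; this matches the conjecture for $r\in\{2,3\}$ but is strictly weaker from $r=4$ on. I would attempt two complementary strategies in parallel. The first is a refined $2$-adic analysis of $g(x)=F(x)/(x-1)^{r+1}$: since every coefficient of $F$ is $\pm 1$, careful tracking of the coefficients of $g$ via its $(x-1)$-adic Taylor expansion, whose coefficients are the binomial-weighted sums $\sum_i\binom{i}{j}\epsilon_i$, should yield congruences modulo successive powers of $2$ that together force $2^r\mid n$. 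The second is a structural induction using the operations of Section~4: since $\E(L_0)$ is invertible, Theorem~\ref{thm:Lw}(2) lets one attempt to show that every $r$-regular word factors, up to swaps and shuffles, as a concatenation of words of the form $L_0(v)$ for $(r-1)$-regular $v$, whose lengths are divisible by $2^{r-1}$ by induction. I expect this divisibility step to be the genuine crux.

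Finally, for uniqueness at $n=2^{r+1}$ the lower-bound inequalities become equalities, and unwinding the equality cases at $\zeta=-1$ and at successive $2^k$-th roots of unity pins down $F(x)=\pm\, x\prod_{k=0}^{r}(1-x^{2^k})$, the generating polynomial of the Thue-Morse initial segment of length $2^{r+1}$. The normalization that $w$ begins with the letter \word{A} removes the remaining symmetry under swapping \word{A} and \word{B}, so $w$ is unique.
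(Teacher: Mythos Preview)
The statement you are addressing is a \emph{conjecture} in the paper, not a theorem: the paper offers no proof, only a computer verification for $r\leqslant 3$ and the remark that the sufficiency direction is covered by the preceding theorem. So there is no ``paper's own proof'' to compare against; the honest comparison is between your proposal and an open problem.

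Your $\Leftarrow$ argument is correct and is exactly what the paper already proves in the theorem immediately preceding the conjecture (build $r$-regular words of lengths $2\cdot 2^r$ and $3\cdot 2^r$ via $L_0$ and concatenate). That part is not new.

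For $\Rightarrow$ your proposal is a research outline, not a proof, and you say as much. Two concrete gaps:
\begin{itemize}
\item The lower bound $n\geqslant 2^{r+1}$ is not established by your sketch. The case $F(-1)\neq 0$ is fine, but your ``iterate at primitive $2^k$th roots'' step does not yield the same conclusion: at $\zeta=i$ one gets $|F(i)|^2 = 2^{r+2}\,|h(i)|^2$ with $|h(i)|^2\in\Z_{\geqslant 0}$, hence only $n\geqslant 2^{(r+2)/2}$, which is strictly weaker for $r\geqslant 1$. Each further step loses more, so the iteration does not close.
\item The divisibility $2^r\mid n$ is, as you yourself note, the crux, and neither of your two strategies is carried out. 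The $2$-adic refinement of the coefficients of $g$ is a plausible line but you have not produced the congruences; the ``factor through $L_0$ up to swaps and shuffles'' idea would require a structure theorem for $r$-regular words that the paper does not have and that is itself at least as hard as the conjecture.
\end{itemize}
Your uniqueness argument at $n=2^{r+1}$ rests on the equality cases of the lower-bound chain, so it inherits the same gap.

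In short: the sufficiency half reproduces what the paper proves; the necessity half and the uniqueness claim remain open, and your proposal correctly locates the difficulty without resolving it.
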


There are similar results for three-letter alphabets.

\begin{theorem} On a three-letter alphabet:
\begin{enumerate}
\item There are $1$-regular words of length $n$ $\Longleftrightarrow$
  $n = 3k$, with $k \geqslant 2$. 
\item There are $2$-regular words of length $n$ $\Longleftrightarrow$
  $n = 9k$, with $k \geqslant 2$. 
\end{enumerate}
\end{theorem}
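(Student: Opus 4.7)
The plan for each part is the same: derive the necessary divisibility condition from elementary counting, rule out the smallest permissible multiple by direct inspection, and prove sufficiency by exhibiting two short starter words and invoking Lemma~\ref{lem:concatenation}.

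For part (1), I would first observe that 1-regularity forces the three blocks to have equal size $n/3$, so $3 \mid n$, and rule out $n = 3$ by noting that the only 3-block partition of $[3]$ has block sums $1, 2, 3$. For sufficiency I would exhibit two starter words: \word{ABCCBA} is 1-regular of length $6$ by direct computation, and $L_1(\word{ABC}) = \word{ABCBCACAB}$ is 1-regular of length $9$ by Theorem~\ref{thm:Lw}(1) applied to the (trivially) $0$-regular \word{ABC}. Since every integer $k \geqslant 2$ admits a representation $k = 2a + 3b$ with $a, b \geqslant 0$, concatenating $a$ copies of the length-$6$ word and $b$ copies of the length-$9$ word (Lemma~\ref{lem:concatenation}) yields a 1-regular word of length $3k$.

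For part (2), $3 \mid n$ is inherited from part (1). Writing $n = 3m$, I would derive $9 \mid n$ from the divisibility by $3$ of the total sum of squares $n(n+1)(2n+1)/6 = m(3m+1)(6m+1)/2$: since $3m+1, 6m+1 \equiv 1 \pmod 3$, one needs $3 \mid m$. To exclude $n = 9$, each block $B$ would have to satisfy $\sum_{x \in B} x = 15$ and $\sum_{x \in B} x^2 = 95$; after the substitution $y = x - 5$ these become $\sum y = 0$ and $\sum y^2 = 20$ with $y \in \{-4, \ldots, 4\}$, and a short case split on whether $0$ lies in the shifted block (for which $\sum y^2 = 20$ would force $2y_1^2 = 20$, with no integer solution) rules out every possibility. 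For sufficiency I would apply $L_1$ to the two 1-regular words from part (1) to obtain, via Theorem~\ref{thm:Lw}(1), $2$-regular words of lengths $18$ and $27$, and the same $k = 2a + 3b$ decomposition produces $2$-regular words of every length $9k$ with $k \geqslant 2$.

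The main obstacle is the exclusion of $n = 9$ in part (2): divisibility alone permits this case, and only a finite computation (such as the $\sum y^2 = 20$ argument sketched above) rules it out. Everything else reduces to a tidy pattern --- a Frobenius-type numerical-semigroup observation that $\{2a + 3b : a, b \geqslant 0\} = \{k \in \mathbb{Z} : k \geqslant 2\}$, one application of the Latin-square doubling construction in Theorem~\ref{thm:Lw} to upgrade $0$-regular to $1$-regular and then $1$-regular to $2$-regular witnesses, and Lemma~\ref{lem:concatenation} for assembly.
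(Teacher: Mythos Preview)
Your proposal is correct. The paper's own proof is a single sentence (``Similar to the proof of the first part of Theorem~\ref{thm:alltwo}''), so your argument necessarily supplies detail the paper omits. Your necessity arguments---equal block sizes giving $3\mid n$, then divisibility of the sum of squares forcing $9\mid n$---are precisely the analog of Theorem~\ref{thm:alltwo}'s divisibility step that the paper is pointing at. Your sufficiency argument, however, is patterned on Theorem~\ref{prop:5.1} rather than Theorem~\ref{thm:alltwo}: instead of exhibiting a single parametric family (the analog of $\word{A}^k\word{B}^{2k}\word{A}^k$), you build two short starter words via the Latin-square construction and then cover all $k\geqslant 2$ by the Frobenius decomposition $k=2a+3b$ and Lemma~\ref{lem:concatenation}. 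This is arguably the more natural route here, since no obvious single-family analog presents itself for three letters. Your exclusion of $n=9$ via the substitution $y=x-5$ and the observation that the block containing $5$ would force $2y_1^2=20$ is a clean finite check that the paper does not spell out at all.
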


\begin{proof}
Similar to the proof of the first part of Theorem~\ref{thm:alltwo}.
\end{proof}

A computer search shows that $\PTE(6,3,1)$  has one word
($\word{ABCCBA}$), $\PTE(18,3,2)$ has nine words, and $\PTE(36,3,3)$
has 152. Those are the minimum lengths of words of regularity 1, 2,
and 3 respectively. These numbers show that: 
\begin{enumerate}
\item There are 2-regular words of length 18 that do not come 
from a  Latin square construction starting with a 1-regular word of
length 6.
\item None of the 3-regular words of length 36 comes from a  Latin
  square  construction starting with a word of length 12, since the Latin
  squares of order 3 are invertible and there are no 2-regular words
  of length 12. 
\end{enumerate}

\begin{theorem}
Let $r \geqslant 3$ and $n = 2\cdot k\cdot  3^{r-1}$ with $k \geqslant 2$. 
Then there exists $r$-regular words of length $n$ over a three-letter alphabet.
\end{theorem}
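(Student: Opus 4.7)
The plan is to prove the theorem by induction on $r$, with base case $r=3$ and the induction step driven by Theorem~\ref{thm:Lw}(1) applied with a normalized Latin square of size $3$. The key auxiliary tool for the base case is Lemma~\ref{lem:concatenation}, together with the elementary observation that every integer $k \geqslant 2$ can be written as $k = 2a + 3b$ with $a, b \in \mathbb{Z}_{\geqslant 0}$.

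For the base case $r=3$, the target lengths are $n = 18k$ with $k \geqslant 2$. The case $k=2$ (length $36$) is supplied by the computer search cited just above this theorem, which produced the $152$ words of $\PTE(36,3,3)$. For the case $k=3$ (length $54$), take any $2$-regular word of length $18$ on the three-letter alphabet (one of the nine words of $\PTE(18,3,2)$, which exist by the preceding theorem) and apply any normalized Latin square of size $3$; Theorem~\ref{thm:Lw}(1) then promotes the regularity from $2$ to $3$, yielding a $3$-regular word of length $54$. For a general $k \geqslant 2$, write $k = 2a + 3b$ with $a, b \geqslant 0$, so that $18k = 36a + 54b$, and concatenate $a$ copies of the length-$36$ word with $b$ copies of the length-$54$ word. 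Lemma~\ref{lem:concatenation} guarantees that the resulting word is still $3$-regular.

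For the induction step, suppose the claim holds at $r-1$ for some $r \geqslant 4$. Given $k \geqslant 2$, the induction hypothesis supplies an $(r{-}1)$-regular word $w$ of length $m = 2k \cdot 3^{r-2}$ on the three-letter alphabet. Apply any normalized Latin square $L$ of size $3$; by Theorem~\ref{thm:Lw}(1) the word $L(w)$ is $r$-regular, and its length is $3m = 2k \cdot 3^{r-1} = n$, exactly as desired.

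There is essentially no serious obstacle: once the length-$36$ example is taken on faith from the cited computer enumeration, the remainder is a direct assembly of Theorem~\ref{thm:Lw}, Lemma~\ref{lem:concatenation}, and the Frobenius fact for the numerical semigroup generated by $2$ and $3$. The only point worth noting is that the induction runs cleanly because the Latin square construction multiplies the length by $3$, so the ratio $n/3 = 2k \cdot 3^{r-2}$ lies in exactly the class handled at level $r-1$.
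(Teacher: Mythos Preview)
Your proof is correct and follows essentially the same route as the paper: induction on $r$, with the base case $r=3$ handled by exhibiting $3$-regular words of lengths $36$ and $54$ and concatenating, and the induction step given by Theorem~\ref{thm:Lw}(1). You supply more detail than the paper does---in particular you explain how the length-$54$ word arises from a $2$-regular length-$18$ word via a Latin square, and you make the $k=2a+3b$ decomposition explicit---but the argument is the same.
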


\begin{proof}
Induction on $r$. For $r=3$ there are 3-regular words of $36=18\cdot 2$ and $54=18\cdot 3$ 
letters, hence, by concatenation, of any length of the form $18k$ with $k \geqslant 2$. 
The induction step follows from Theorem~\ref{thm:Lw}.
\end{proof}

\section{Resource allocation}

How does all this help answer the question of three or more cups of
coffee? We model the concentration of coffee in a cylindrical
cafeti\`{e}re as a function $f \colon [0,1] \to \R$. (In reality $f$
will increase with depth, but we won't need that.)
To fill $b$ cups of coffee with $m$ pours of equal size we want to
choose a partition $\{B_1, \ldots, B_b \}$
of the set of subintervals 
\begin{equation}\label{eq:partition01}
\left\{\left[0,\frac{1}{m}\right], \left[\frac{1}{m},
  \frac{2}{m}\right], \ldots , \left[\frac{m-1}{m},1\right] \right\}
\end{equation}
such that the integrals 
\begin{equation}\label{eq:cj}
c_j  = \int_{B_j} f(x)dx
 = \sum_{I \in B_j}\int_I f(x)dx
\end{equation}
are as nearly equal as possible.

We will identify the intervals in \eqref{eq:partition01}
by $m$ times their right endpoints, so the partitions of that set of
intervals are just the partitions of $\{1,2, \ldots, m \}$ we have been
studying.

\begin{theorem}
If $B \in \PTE(m,b,r)$ then the integrals in Equation~\eqref{eq:cj}
are independent of $j$ when $f$ is a polynomial of degree at most 
$r$. Therefore $B$ is a perfect pouring.
\end{theorem}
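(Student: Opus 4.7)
The plan is to reduce the claim to the case $f(x) = x^k$ for $0 \leqslant k \leqslant r$ by linearity of the integral, and then show that integrating such a monomial over a single interval $[(i-1)/m, i/m]$ produces a polynomial in $i$ whose degree is at most $r$, so that $r$-regularity of the partition takes care of the rest.

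Concretely, first I would write $f = \sum_{k=0}^{r} \alpha_k x^k$ and observe that $c_j$ depends linearly on the coefficients $\alpha_k$, so it suffices to prove $c_j$ is independent of $j$ for each monomial $f(x)=x^k$ with $0 \leqslant k \leqslant r$. Next I would compute
\begin{equation*}
\int_{(i-1)/m}^{i/m} x^{k}\,dx = \frac{1}{(k+1)m^{k+1}}\bigl(i^{k+1} - (i-1)^{k+1}\bigr),
\end{equation*}
and expand $(i-1)^{k+1}$ by the binomial theorem; the $i^{k+1}$ terms cancel, leaving a polynomial $p_k(i)$ in $i$ of degree at most $k \leqslant r$. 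Thus
\begin{equation*}
c_j = \sum_{i \in B_j} p_k(i) = \sum_{\ell=0}^{k} \beta_\ell \sum_{i \in B_j} i^{\ell}
\end{equation*}
for some coefficients $\beta_\ell$ depending only on $k$ and $m$.

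Finally, by the definition of $\PTE(m,b,r)$, each inner sum $\sum_{i \in B_j} i^{\ell}$ with $0 \leqslant \ell \leqslant r$ is independent of $j$, so $c_j$ itself is independent of $j$, as required. There is really no hard step here; the only thing to check carefully is that the cancellation of the top-degree terms in $i^{k+1} - (i-1)^{k+1}$ does bring the degree down to $k$ (and hence $\leqslant r$), which is precisely what allows $r$-regularity to apply.
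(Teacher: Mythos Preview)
Your proposal is correct and follows essentially the same route as the paper: reduce by linearity to monomials $x^k$ with $k\leqslant r$, integrate over $[(i-1)/m,i/m]$ to obtain $\frac{1}{(k+1)m^{k+1}}\bigl(i^{k+1}-(i-1)^{k+1}\bigr)$, observe this is a polynomial of degree $k\leqslant r$ in $i$, and invoke $r$-regularity. The paper's version differs only cosmetically (it uses the substitution $y=mx$ and states the linearity step at the end rather than the beginning).
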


\begin{proof}
Consider first a monomial $f(x) = x^n$ for $ n \leqslant r$.
Using the change of variable $y=mx$  we have
\begin{equation*}
c_j = \frac{1}{m^{n+1}} \sum_{i \in B_j}\int_{i-1}^i y^n dy =  \frac{1}{(n+1)m^{n+1}} 
            \sum_{i \in B_j} \Bigl( i^{n+1} - (i-1)^{n+1} \Bigr) . 
\end{equation*}
But $i^{n+1} - (i-1)^{n+1}$ is a polynomial of degree $n$ in $i$ and 
since $B$ is $r$-regular and $n\leqslant r$, the last sum is
independent of $j$. Having proved the theorem for monomials its truth
follows easily for polynomials.\end{proof}

This argument may seem circular. It's not: the
theorem asserts the equality of integrals of sums of powers;
the last part of the last paragraph uses regularity to prove the
equality of sums of sums of powers.

When $f$ is not a polynomial we can use the first few terms of its
Taylor expansion to find pretty good pourings.

\begin{theorem}\label{thm:taylor}
Let $f \colon [0,1] \to \R$ be an $r\!+\!1$-times differentiable
function and suppose  $|f^{(r+1)}(x) | \leqslant M$ for all $0
\leqslant x \leqslant 1$. If $B \in \PTE(m,b,r)$ then
\begin{equation}\label{eq:abscicj}
| c_i - c_j| \leqslant \frac{M}{2^{r} b (r+1)!}\; .
\end{equation}
\end{theorem}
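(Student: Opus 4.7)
The plan is to reduce the problem to bounding the Taylor remainder, using the previous theorem to kill the polynomial part of $f$ exactly.

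First I would fix the expansion point $x_0 = 1/2$ (the choice of $1/2$, as opposed to $0$ or $1$, is what pulls the factor $2^{-r}$ out of the bound). Write Taylor's formula with remainder:
\begin{equation*}
f(x) = T_r(x) + R_r(x), \qquad T_r(x) = \sum_{k=0}^{r} \frac{f^{(k)}(1/2)}{k!}\bigl(x - \tfrac{1}{2}\bigr)^k,
\end{equation*}
so that $|R_r(x)| \leqslant \dfrac{M}{(r+1)!}\bigl|x-\tfrac{1}{2}\bigr|^{r+1} \leqslant \dfrac{M}{2^{r+1}(r+1)!}$ for $x\in[0,1]$.

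Next, since $T_r$ is a polynomial of degree at most $r$, the previous theorem applies to it: the $c_j$-integrals of $T_r$ over the different blocks $B_j$ all agree. Hence for any $i,j$,
\begin{equation*}
c_i - c_j = \int_{B_i} R_r(x)\,dx - \int_{B_j} R_r(x)\,dx.
\end{equation*}

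Each block $B_j$ is a disjoint union of $m/b$ subintervals of length $1/m$, so its total Lebesgue measure is $1/b$. Using the uniform bound on $R_r$ above,
\begin{equation*}
\left|\int_{B_j} R_r(x)\,dx\right| \leqslant \frac{M}{2^{r+1}(r+1)!} \cdot \frac{1}{b}.
\end{equation*}
The triangle inequality then gives the claimed bound $|c_i - c_j| \leqslant \dfrac{M}{2^{r}b(r+1)!}$.

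There isn't really a hard step; the only subtlety is the centering. Centering the Taylor expansion anywhere other than $1/2$ would produce the weaker constant $1/(b(r+1)!)$ (or $2/(b(r+1)!)$), losing the $2^{-r}$ factor. Choosing the midpoint balances $|x - x_0|$ on $[0,1]$ and yields the advertised constant exactly.
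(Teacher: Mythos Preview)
Your proof is correct and follows essentially the same route as the paper: center the Taylor expansion at $1/2$, use the previous theorem to cancel the polynomial part in $c_i - c_j$, bound the remainder by $M/(2^{r+1}(r+1)!)$, and integrate over blocks of measure $1/b$. Your added remark explaining why the midpoint is the right centering is a nice touch the paper leaves implicit.
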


\begin{proof}
The Lagrange formula for the remainder of the
Taylor expansion of $f$ about $1/2$ says that
\begin{equation*}
f(x) = \text{a polynomial of degree } r
+ R(x)
\end{equation*}
where the error term satisfies
\begin{equation*}
| R(x)| = 
\Bigl| \frac{f^{(r+1)}(\xi_x)}{(r+1)!} \Bigl(x-\frac{1}{2}\Bigr)^{r+1} \Bigr|
 \leqslant 
\frac{M}{2^{r+1} (r+1)! }
\end{equation*}
for some $\xi_x$ between $0$ and $1$.
Then
\begin{equation}\label{eq:cicj}
c_i - c_j =  \int_{B_i} f(x)dx - \int_{B_j} f(x)dx
= \int_{B_i} R(x)dx - \int_{B_j} R(x)dx
\end{equation}
because the polynomial parts of the expansion of $f$ contribute the same
amount to the difference.
Each of the two terms in \eqref{eq:cicj} satisfies the inequality
\begin{equation*}
\Bigl| \int_{B_i} R(x)dx \Bigr|
\leqslant \frac{M}{2^{r+1} b (r+1)!}
\end{equation*}
since $B_i$ is the union of $m/b$ intervals each of length $1/m$.
Then their difference satisfies \eqref{eq:abscicj}.
\end{proof}

\begin{example} Suppose $f(x) = e^{-ax}$, with $a>0$. Then 
\begin{equation*}
\Bigl| f^{(r)}(x) \Bigr| = \Bigl| (-a)^r e^{-ax}\Bigr| \leqslant a^r\; .
\end{equation*}
Then the right side of \eqref{eq:abscicj} 
approaches $0$ as $r \to \infty$, so we have a strategy for pouring as
equitably as we wish by choosing a PTE solution with 
$r$ large enough.
\end{example}

The inequality in \eqref{eq:abscicj} provides a quantitative estimate
of the error of a particular pouring. Here is a more general
qualitative assertion:
\begin{theorem}\label{thm:analytic}
Suppose $f \colon [0,1] \to \R$ is analytic.
Then we can get a pouring as close to equitable as we want by choosing
a partition in $\PTE(m,b,r)$ for $r$ large enough.
\end{theorem}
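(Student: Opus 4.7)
The plan is to reduce Theorem~\ref{thm:analytic} to the (unlabeled) theorem on polynomial pourings stated just before Theorem~\ref{thm:taylor}, via uniform polynomial approximation. Concretely, fix a tolerance $\epsilon > 0$. By the Weierstrass approximation theorem there is a polynomial $p$ of some degree $r$ satisfying $\sup_{x \in [0,1]} |f(x) - p(x)| < \epsilon/2$. (Analyticity of $f$ is actually stronger than needed here; continuity on $[0,1]$ suffices.)

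Next I need $\PTE(m,b,r)$ to be nonempty for some $m$. This is supplied by Theorem~\ref{thm:Lw}(1): starting from the length-$b$ word $a_1 a_2 \ldots a_b$, which encodes a $(-1)$-regular partition of $[b]$, and applying any normalized Latin square of size $b$ iteratively $r+1$ times yields an $r$-regular word on $b$ letters of length $b^{r+2}$. (Part~(1) places no invertibility requirement on $\E(L)$, so any normalized Latin square works.) Pick such an $m$ and any $B \in \PTE(m,b,r)$.

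Writing $c_j(g) := \int_{B_j} g(x)\, dx$, the polynomial pouring theorem applied to $p$ gives $c_j(p) = c_k(p)$ for all $j, k$. Each $B_j$ has total Lebesgue measure $1/b$ (as a union of $m/b$ intervals of length $1/m$), so $|c_j(f) - c_j(p)| \leqslant \|f - p\|_{\infty} / b < \epsilon / (2b)$ for every $j$. The triangle inequality then yields $|c_j(f) - c_k(f)| < \epsilon / b \leqslant \epsilon$, completing the proof.

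There is no genuine obstacle: the argument is just Weierstrass plus the polynomial case, and the Latin-square recursion ensures the needed $r$-regular partitions exist. A natural quantitative alternative would combine Theorem~\ref{thm:taylor} with Cauchy's estimates on a complex neighborhood of $[0,1]$, giving a bound of the form $C/(b\rho(2\rho)^r)$ whenever $f$ extends analytically to a $\rho$-neighborhood of $[0,1]$; that bound tends to zero only when $\rho > 1/2$, so it does not cover every analytic $f$ directly, whereas the Weierstrass route handles all such $f$ uniformly.
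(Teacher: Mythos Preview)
Your argument is correct and follows the same template as the paper's: uniformly approximate $f$ by a polynomial of degree $r$, apply the polynomial pouring theorem so that the polynomial part contributes equally to every cup, and bound the contribution of the remainder using that each block has total measure $1/b$. The one substantive difference is the source of the approximating polynomial: the paper simply appeals to the Taylor partial sums of $f$ about $1/2$ (reusing Equation~\eqref{eq:cicj}), while you invoke Weierstrass. Your route is more general---it needs only continuity of $f$---and, as you note, it sidesteps the issue that ``analytic on $[0,1]$'' does not by itself force the Taylor series about $1/2$ to converge on all of $[0,1]$. You also make the nonemptiness of $\PTE(m,b,r)$ explicit via the Latin-square recursion of Theorem~\ref{thm:Lw}(1); the paper leaves this implicit, having established existence results in the preceding section. (Minor remark: your seed word $a_1a_2\cdots a_b$ is in fact already $0$-regular, so $r$ iterations of $L$ would suffice, but your count of $r+1$ is of course still correct.)
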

\begin{proof}
The difference in remainders in Equation~\eqref{eq:cicj} can be made
arbitrarily small since $f$ is the uniform limit of the partial sums
of its power series.
\end{proof}

In \cite{levine-stange} the authors address resource allocations for
two players and remark that ``It would be interesting to quantify the 
intuition that the Thue-Morse order tends to produce a fair outcome.'' 
Theorem~\ref{thm:analytic} and Conjecture~\ref{conj:maxreg} show that
allocations tend to be more equitable as regularity increases, and
that the Thue-Morse sequence  produces the highest regularity for
words of fixed lengths that are powers of 2. 

In \cite{Richman} Richman showed that the Thue-Morse sequence provides
the most equitable pourings into two cups for a variety of density
functions $f$. Our analysis here does not extend his; all we show is
that regular partitions yield good pourings.

Should you ever actually use a regular partition for a pouring you can
take advantage of double letters in the word to save a few switches: 
\word{ABBABAAB} requires just $5$, not $7$. But don't get your hopes
up. The Thue-Morse sequence never contains \word{xxx}. That's probably
true for our generalizations, too. Nor are you  likely to find 
\word{xxyyzz}.%
\footnote{``bookkeeper'' is essentially the only English word we know
  that does.}

\end{document}